\theoremstyle{plain}
\newtheorem{thm}{Theorem}[section]
\newtheorem{lem}[thm]{Lemma}
\newtheorem{THM}{Theorem}
\newtheorem{rem}[thm]{Remark}
\newtheorem{defn}[thm]{Definition}
\numberwithin{equation}{thm}
\newcommand{\Z}{\mathbb Z}
\newcommand{\C}{\mathbb C}
\newcommand{\fg}{\mathfrak{g}}
\newcommand{\fb}{\mathfrak{b}}
\newcommand{\fm}{\mathfrak{m}}
\newcommand{\ft}{\mathfrak{t}}
\newcommand{\fu}{\mathfrak{u}}
\newcommand{\fq}{\mathfrak{q}}
\newcommand{\B}{\mathcal{B}}
\begin{document}

\title[The Connectedness of Hessenberg Varieties]{The Connectedness of Hessenberg Varieties}
\author{Martha Precup}
\address{Department of Mathematics\\Baylor University\\One Bear Place \#97328\\Waco, TX 76798}
\email{Martha\_Precup@baylor.edu}

\keywords{Hessenberg varieties, affine paving, rationally connected}
\subjclass[2010]{Primary 14L35, 14M15}

%
%
%

\begin{abstract}
In this paper we consider certain closed subvarieties of the flag variety, known as Hessenberg varieties.  We give a connectedness criterion for semisimple Hessenberg varieties generalizing a criterion given by Anderson and Tymoczko.  We show that nilpotent Hessenberg varieties are rationally connected.
\end{abstract}
\maketitle
\section{Introduction}

In this paper we explore the connectedness properties of semisimple and nilpotent Hessenberg varieties.  Hessenberg varieties are a family of subvarieties of the flag variety first introduced in \cite{dMPS}.  We give a criterion for semisimple Hessenberg varieties to be connected and prove that nilpotent Hessenberg varieties are rationally connected.

Let $G$ be a linear, reductive algebraic group over $\C$, $B$ a Borel subgroup, and let $\fg$, $\fb$ denote their respective Lie algebras.  A Hessenberg space $H$ is a linear subspace of $\fg$ that contains $\fb$ and is closed under the Lie bracket with $\fb$.  Fix an element $X\in \fg$ and a Hessenberg space $H$.  The Hessenberg variety, $\B(X,H)$, is the subvariety of the flag variety $G/B=\B$ consisting of all Borel subalgebras $g\cdot \fb$ such that $g^{-1}\cdot X\in H$ where $g\cdot X$ denotes the adjoint action $Ad(g)(X)$.

When $S\in \fg$ is semisimple, we say $\B(S,H)$ is semisimple.   In \cite{P} it is shown that all such Hessenberg varieties are paved by affines.  Applying this result, we have a method of computing the Betti numbers of $\B(S,H)$.  As a consequence, we are able to show the following, which is Theorem \ref{connS} below.

\begin{THM}  If $S\in \fg$ is a semisimple element which is not in the center of $\fg$, then $\B(S,H)$ is connected if and only if the Hessenberg space $H$ contains all root space vectors corresponding to the negative simple roots.
\end{THM}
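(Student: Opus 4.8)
The plan is to reduce to a Cartan representative, apply the affine paving of \cite{P} to turn connectedness into a count of zero-dimensional cells, and then settle a combinatorial question about the Weyl group $W$ whose engine is a short computation with the longest element $w_0$. First I would normalize the element: every semisimple element of $\fg$ is conjugate into $\fh$, and left translation by $h$ on $\B=G/B$ gives an isomorphism $\B(S,H)\cong\B(\mathrm{Ad}(h)S,H)$ with the \emph{same} Hessenberg space $H$, since $g\fb\mapsto hg\fb$ carries the defining condition $\mathrm{Ad}(g^{-1})S\in H$ to $\mathrm{Ad}((hg)^{-1})\mathrm{Ad}(h)S\in H$. Conjugating further by a suitable Weyl group element, I may assume $S\in\fh$ is dominant, so that $\Phi_S:=\{\alpha\in\Phi:\alpha(S)=0\}=\Phi_I$ is generated by the simple roots $I=\{\alpha_i\in\Delta:\alpha_i(S)=0\}$. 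The hypothesis that $S$ is not central is precisely the statement that $I\neq\Delta$. Since $S\in\fh$, every $T$-fixed point $w\fb$ satisfies $\mathrm{Ad}(w^{-1})S\in\fh\subseteq\fb\subseteq H$ and hence lies in $\B(S,H)$; these fixed points index the cells of the paving.

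Next I would use that a variety paved by affines has free homology concentrated in even degrees, with $b_{2k}$ equal to the number of cells of complex dimension $k$. In particular $b_0$ equals the number of zero-dimensional cells, and as $b_0$ also counts connected components, $\B(S,H)$ is connected if and only if its paving has exactly one zero-dimensional cell. Writing $M=\{\alpha\in\Phi^+:\fg_{-\alpha}\subseteq H\}$, the criterion ``$H$ contains every negative simple root space'' reads exactly $\Delta\subseteq M$, so the whole statement becomes: the paving of \cite{P} has a unique zero-dimensional cell if and only if $\Delta\subseteq M$. In the regular case $I=\varnothing$ the dimension formula of \cite{P} identifies the zero-dimensional cells with $\{w\in W:w(M)\subseteq\Phi^-\}$, and the combinatorial heart is the claim that $\#\{w\in W:w(M)\subseteq\Phi^-\}=1$ if and only if $\Delta\subseteq M$.

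I would prove this claim as follows. If $\Delta\subseteq M$, any $w$ with $w(M)\subseteq\Phi^-$ satisfies $w(\alpha_i)\in\Phi^-$ for all simple $\alpha_i$, equivalently $\ell(ws_i)<\ell(w)$ for all $i$, which forces $w=w_0$; since $w_0(\Phi^+)=\Phi^-$ gives $w_0(M)\subseteq\Phi^-$, the set is exactly $\{w_0\}$. Conversely, if some simple root $\alpha_j\notin M$, then $w_0s_j$ also qualifies: the reflection $s_j$ permutes $\Phi^+\setminus\{\alpha_j\}$ and $M\subseteq\Phi^+\setminus\{\alpha_j\}$, so $s_j(M)\subseteq\Phi^+$ and $w_0s_j(M)\subseteq w_0(\Phi^+)=\Phi^-$; as $w_0s_j\neq w_0$ there are at least two zero-dimensional cells, so $\B(S,H)$ is disconnected. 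This $w_0$ versus $w_0s_j$ dichotomy is the crux of the combinatorial argument.

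I expect the genuine obstacle to be the non-regular case $I\neq\varnothing$, where the $T$-fixed points are no longer isolated in $\B(S,H)$ and the cells fiber over flag varieties of the centralizer $Z_G(S)$. Here one must read off the exact index set of zero-dimensional cells from the dimension formula of \cite{P} and verify that it still collapses to the threshold $\Delta\subseteq M$; in particular the contrapositive step requires a missing simple root $\alpha_j\notin M$ that is a genuine ``base'' rather than a ``fiber'' direction, and this is exactly where the non-centrality hypothesis $I\neq\Delta$ is used to ensure the corresponding $w_0s_j$ produces a second zero-dimensional cell. Making this bookkeeping precise, uniformly in $S$, is the main work.
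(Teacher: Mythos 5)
Your reduction to $S\in\ft$ dominant, the use of the affine paving to equate connectedness with the existence of a unique zero-dimensional cell, and the $w_0$ versus $w_0s_j$ dichotomy correctly dispose of the regular case $I=\varnothing$ --- but that case was already known (\cite{AT}, Appendix A; \cite{dMPS}, Corollary 9(i)), and the non-regular case, which you explicitly defer as ``the main work,'' is precisely where the content of the theorem lies. So there is a genuine gap, and it is not mere bookkeeping. Concretely: by the dimension formula of \cite{P} (Lemma \ref{dimen}), writing $w=yv$ with $y\in W_M$ and $v\in W^M$ for the Levi $M=Z_G(S)$, the zero-dimensional cells are indexed by those $v\in W^M$ with $\Phi_v\cap v(\Phi_H^-)=\emptyset$, equivalently $v^{-1}(\Phi_v)\cap\Phi_H^-=\emptyset$. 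If the missing simple root $\alpha$ (i.e.\ $-\alpha\notin\Phi_H^-$, $\alpha_j\notin M$ in your notation) lies in $\Phi(\fm)$, your construction collapses: $s_\alpha\in W_M$, so its minimal coset representative is $e$ and the corresponding cell has positive dimension coming from the $|\Phi_y|$ term; moreover no translate-by-$w_0$ of the regular argument produces a second zero-dimensional cell, since $w\mapsto w_0w$ does not preserve membership in $W^M$. Your closing remark even suggests the non-centrality hypothesis is used to \emph{avoid} the case where the missing simple root is a ``fiber'' (Levi) direction; in fact the theorem must produce disconnectedness exactly in that case, and that is where the hypothesis is used.

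The paper fills this gap with three ingredients absent from your sketch. First, the Hessenberg closure property ($\Phi_H$ closed under addition with roots of $\Phi^+$) is used to propagate the missing root upward: writing $\Phi_{\geq\alpha}$ for the positive roots $\geq\alpha$ in the standard partial order, one shows $-\Phi_{\geq\alpha}\cap\Phi_H^-=\emptyset$ (your regular-case argument never needed this property, which is a sign it cannot extend as stated). Second, since $\Phi_{\geq\alpha}$ and its complement in $\Phi^+$ are both closed under addition, Kostant's result (\cite{K}, Proposition 5.10) yields $w\in W$ with $\Phi_w=\Phi_{\geq\alpha}$; taking the $W^M$-component $v$ of $w^{-1}$, one computes $v^{-1}(\Phi_v)\subseteq -\Phi_{\geq\alpha}$, so $v$ indexes a zero-dimensional cell. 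Third, non-centrality of $S$ guarantees the highest root $\theta$ lies in $\Phi(\fu_Q)$, whence $\Phi(\fu_Q)\cap\Phi_{\geq\alpha}\neq\emptyset$ and $v\neq e$, giving the required second zero-dimensional cell. (A minor convention point: your regular-case index set $\{w: w(M)\subseteq\Phi^-\}$ is the $w_0$-translate of the set the paving of \cite{P} actually produces, namely $\{w: w(\Phi_H^-)\subseteq\Phi^-\}$ with unique element $e$ rather than $w_0$ when $-\Delta\subseteq\Phi_H^-$; this is harmless, unlike the gap above. Your converse direction --- disconnected implies a missing negative simple root --- does match the paper's short argument via a simple root in $\Phi_{w^{-1}}$.)
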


The proof depends entirely on combinatorial properties of the root system associated to $\fb\subset \fg$.  This criterion was previously only known when $S\in \fg$ is a regular semisimple element (see \cite{AT}, Appendix A and \cite{dMPS}, Corollary 9(i)).   

When $N\in\fg$ is nilpotent, we say that $\B(N,H)$ is nilpotent.  When $N$ is regular in a Levi, $\B(N,H)$ is paved by affines (see \cite{T}, \cite{T2}, \cite{P}).  However, instead of using an affine paving to compute Betti numbers, we construct a sequence of rational curves connecting any two points in $\B(N,H)$, yielding the following for any choice of nilpotent $N\in \fg$.

\begin{THM}  The Hessenberg variety $\B(N,H)$, corresponding to any nilpotent element $N\in \fg$ and any Hessenberg space $H$, is rationally connected.
\end{THM}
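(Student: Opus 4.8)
The plan is to exploit a torus action on $\B(N,H)$ coming from the nilpotent $N$ itself, to reduce the problem to the fixed locus of that action, and then to induct on $\dim\fg$. First I would normalize the situation: since conjugating $N$ replaces $\B(N,H)$ by an isomorphic variety, I may assume $N\in\fn$, the nilradical of $\fb$, in which case $g=e$ shows $\fb\in\B(N,H)$, so the variety has a distinguished base point. Let $\lambda\colon\C^{\times}\to T$ be the dominant cocharacter attached to an $\mathfrak{sl}_{2}$-triple through $N$ (Jacobson--Morozov), normalized so that $\mathrm{Ad}(\lambda(t))N=t^{2}N$. The key observation is that $\lambda$ preserves $\B(N,H)$: if $g^{-1}\cdot N\in H$ then $(\lambda(t)g)^{-1}\cdot N=t^{-2}\,g^{-1}\cdot N\in H$, because $H$ is a linear subspace. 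Thus $\lambda$ defines a $\C^{\times}$-action on the projective variety $\B(N,H)$, and since $\lambda(\C^{\times})\subseteq B$ it fixes the base point $\fb$.

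Next I would use this action to pass to the fixed locus. For each $x\in\B(N,H)$ the orbit map $t\mapsto\lambda(t)\cdot x$ extends to a morphism $\P^{1}\to\B(N,H)$ whose image is a (possibly constant) rational curve joining $x$ to its two limits as $t\to0$ and $t\to\infty$, both of which lie in the fixed locus $F=\B(N,H)^{\lambda}$. Hence every point of $\B(N,H)$ is joined by a single rational curve to $F$, and it suffices to connect all of $F$ to $\fb$ by chains of rational curves. The $\lambda$-fixed Borel subalgebras are exactly those containing $\lambda(\C^{\times})$; writing $L=Z_{G}(\lambda)$ for the associated Levi and $P\supseteq B$ for the parabolic with $\mathrm{Lie}(P)=\bigoplus_{i\ge0}\fg_{i}$, these form a disjoint union of translates of the flag variety of $L$.

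Now I would analyze the component through the base point. Since $\ft\subseteq\fb\subseteq H$ and $[\fb,H]\subseteq H$, the space $H$ is $T$-stable, hence $\lambda$-graded, $H=\bigoplus_{i}H_{i}$ with $H_{i}=H\cap\fg_{i}$. Parametrizing Borels in $P$ by $(l,u)\in (L/(B\cap L))\times U_{P}$ and using that $N\in\fg_{2}$, one finds that the component $Y\cong L/(B\cap L)$ through $\fb$ meets $\B(N,H)$ in a variety whose fibers over the set $\{\,l(B\cap L):\mathrm{Ad}(l^{-1})N\in H_{2}\,\}$ are affine spaces. That base is a Hessenberg-type variety for $L$ acting on its representation $\fg_{2}\ni N$, so I would prove the theorem in the slightly larger class of such ``linear'' Hessenberg varieties; then this component is rationally chain connected by the inductive hypothesis, with base case $N=0$ giving the rational variety $G/B$, and the affine-bundle structure propagates rational chain connectedness upward.

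The main obstacle is the final linking step: the fixed locus $F$ is \emph{a priori} disconnected, so its remaining components must be joined to the component $Y$ through $\fb$ by rational curves lying in $\B(N,H)$ but not in $F$. Here I would use the Bialynicki--Birula decomposition of $\B(N,H)$ determined by $\lambda$, whose closed cells are connected, cover $\B(N,H)$, and meet one another along the flow, to show that the incidence graph of fixed components is connected; traversing one edge of that graph amounts to following one more orbit-closure rational curve of the type produced above. I expect that verifying this connectivity of the incidence graph, together with framing the induction in the generalized module setting so that each fixed component is genuinely rationally chain connected, will be the delicate heart of the argument; by comparison, the torus action, the reduction to $F$, and the affine-bundle analysis of $Y\cap\B(N,H)$ are the formal parts.
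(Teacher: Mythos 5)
Your setup is sound as far as it goes: the observation that $H$ is a linear subspace, so the Jacobson--Morozov cocharacter $\lambda$ with $\mathrm{Ad}(\lambda(t))N=t^2N$ acts on $\B(N,H)$, is correct, and the orbit-closure curves joining each point to the fixed locus $F=\B(N,H)^{\lambda}$ are genuine rational curves in $\B(N,H)$. But the proof has a genuine gap exactly where you flag it, and the gap is not a technicality --- it is essentially circular. The standard fact that the incidence graph of fixed components of a $\C^{\times}$-action is connected holds for a \emph{connected} projective variety; here the connectedness of $\B(N,H)$ is (a consequence of) the very statement being proved, so the Bialynicki--Birula decomposition gives you no unconditional way to produce curves joining distinct components of $F$. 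The degenerate but instructive test case is $N$ regular nilpotent: then $\lambda$ is principal, $L=Z_G(\lambda)=T$, every fixed component is a single point $\dot{w}\cdot\fb$, and your scheme reduces the theorem to precisely the unresolved incidence-graph claim with no simplification at all. The paper's proof supplies exactly the missing mechanism: for $\dot{w}\cdot\fb\in\B(N_0,H)$ it exhibits a concrete one-parameter curve $\overline{u_0U_{\gamma}\dot{w}\cdot\fb}$, where $\gamma$ is chosen \emph{maximal} in the inversion set $\Phi_w$ for the partial order on positive roots (the set $\Phi_w^m$); maximality forces every root $\gamma'=c\gamma+\alpha$ appearing in $u^{-1}\cdot N_0$ to lie in $\Phi_w^c$, hence in $\dot{w}\cdot H$, so the whole curve stays in $\B(N,H)$, and its limit $u_0\dot{s}_{\gamma}\dot{w}\cdot\fb$ has strictly smaller $l(w)$, giving a chain to $\fb$ by induction on length. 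Nothing in your BB framework substitutes for this choice of root subgroup.

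The second gap is the induction on $\dim\fg$ through the fixed components. You correctly compute that the component through $\fb$ meets $\B(N,H)$ in $\{\,l(B\cap L): \mathrm{Ad}(l^{-1})N\in H_2\,\}$, a Hessenberg-type locus for the $L$-module $\fg_2$, but no rational-connectedness theorem for such ``linear'' Hessenberg varieties is available, and the analogous statement is false for arbitrary elements of arbitrary modules: Section 3 of this very paper shows that for a semisimple element (an element of the $\fg_0$-part of the adjoint module) the variety is disconnected unless $-\Delta\subseteq\Phi_H^-$. So the enlarged class for your induction would have to be delimited carefully, with an argument, and none is given; moreover, since $\B(N,H)$ can be singular, the BB strata need not be affine bundles over the fixed components, so the claimed ``affine-bundle structure propagates'' step also needs justification. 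In short: the torus action and the reduction to $F$ are correct but formal, while both load-bearing steps --- linking the fixed components and establishing rational connectedness of each --- remain unproved, and the first of them is where the paper's actual idea (maximal roots $\Phi_w^m$ and the curves $u_0U_{\gamma}\dot{w}\cdot\fb$) does all the work.
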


The second section of this paper covers the background information and facts used in the following sections.  The third and fourth address the semisimple and nilpotent cases respectively.


\section{Preliminaries}
Let $G$ and $B$ be as above.  Let $T\subset B$ be a maximal torus with Lie algebra $\ft$ and denote by $W$ the Weyl group associated to $T$.  Fix a representative $\dot{w}\in N_G(T)$ for each Weyl group element $w\in W= N_G(T)/T$.  Let $\Phi^+$, $\Phi^-$ and $\Delta$ denote the positive, negative, and simple roots associated to the previous data.  Denote the root space corresponding to $\gamma\in\Phi$ by $\fg_{\gamma}$ and fix a generating root vector $E_{\gamma}\in \fg_{\gamma}$.  Write $U$ for the maximal unipotent subgroup of $B$, $U^-$ for the maximal unipotent subgroup of the opposite Borel, and $\fu$ and $\fu^-$ for their respective Lie algebras.  Denote the $1$-dimensional unipotent subgroup of $G$ corresponding to $\gamma\in \Phi$ by $U_{\gamma}$, i.e., $U_{\gamma}=exp(tE_{\gamma})$ for $t\in \C$. 

The projective variety $G/B$ is called the {\it flag variety} and is naturally identified with the set $\B$ of all Borel subalgebras of $\fg$.  The Hessenberg variety is a closed subvariety of the flag variety.  We define it here.

\begin{defn}  A subspace $H\subseteq \fg$ is a {\it Hessenberg space} with respect to $\fb$ if $\fb\subset H$ and $H$ is a $\fb$-submodule of $\fg$, i.e., $[\fb,H]\subseteq H$.
\end{defn}

Denote by $\Phi_H\subseteq \Phi$ the subset of roots such that $H=\ft \oplus \bigoplus_{\gamma \in\Phi_H} \fg_{\gamma}$.  Then the condition that $H$ be a Hessenberg space is equivalent to requiring that $\Phi^+\subseteq \Phi_H$ and $\Phi_H$ is closed with respect to addition with roots from $\Phi^+$.  Let $\Phi_H^-:=\Phi_H\cap \Phi^-$.

Let $X\in \fg$ and $H$ be a fixed Hessenberg space.  Set
		\[
				G(X,H)=\{ g\in G : g^{-1}\cdot X\in H \}
		\]
where $g\cdot X$ denotes $Ad(g)(X)$.  Then $G(X,H)$ is a closed subvariety of $G$ which is invariant under right multiplication by $B$.  

\begin{defn}  Let
		\[
				\B(X,H)=\{ g\cdot \fb \in \B : g\in G(X,H)    \}
		\]
denote the image of $G(X,H)$ in the flag variety $\B$.  This is the Hessenberg variety associated to $X$ and $H$.  If $X$ is semisimple we say $\B(X,H)$ is a semisimple Hessenberg variety and if $X$ is nilpotent we say $\B(X,H)$ is a nilpotent Hessenberg variety.  
\end{defn}

There are two extremal cases to be considered as examples.  First, when $H=\fb$, 
		\begin{eqnarray*}
				\B(X,\fb)=\{ g\cdot \fb : g^{-1}\cdot X\in \fb \}
					    =\{ g\cdot \fb : X\in g\cdot \fb \}
					     =:\B^X
		\end{eqnarray*}
is the variety of Borel subalgebras containing $X$, called the {\it Springer variety} or {\it Springer fiber}.  In the other extreme, when $H=\fg$, $\B(X,\fg)=\B$, the full flag variety. 

In what follows we give a criterion for semisimple Hessenberg varieties to be connected and prove that all nilpotent Hessenberg varieties are rationally connected.  The first makes use of the fact that all semisimple Hessenberg varieties are paved by affines, yielding combinatorial methods of calculating their Betti numbers.  The second result is constructive, providing insight into the structure of nilpotent Hessenberg varieties.

%

\section{Semisimple Hessenberg Varieties} \label{ss}


To give a criterion for semisimple Hessenberg varieties to be connected we use the fact that they are paved by affines. 
 
\begin{defn}  A paving of an algebraic variety $Y$ is a filtration by closed subvarieties
		\[
				Y_0\subset Y_1 \subset \cdots \subset Y_i \subset \cdots \subset Y_d=Y.
		\]	
A paving is affine if every $Y_i-Y_{i-1}$ is a finite disjoint union of affine spaces.
\end{defn}

An affine  paving computes the Betti numbers of an algebraic variety as follows.

\begin{lem}\label{betti} (\cite{F}, 19.1.11)  Let $Y$ be an algebraic variety with an affine paving, $Y_0\subset Y_1 \subset \cdots \subset Y_i \subset \cdots \subset Y_d=Y$.  Then the nonzero compactly supported cohomology groups of $Y$ are given by $H_c^{2k}(Y)= \Z^{n_k}$ where $n_k$ denotes the number of affine components of dimension $k$.
\end{lem}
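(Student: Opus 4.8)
The plan is to induct on the length $d$ of the filtration, using the long exact sequence in compactly supported cohomology associated to a closed subvariety together with its open complement. The two inputs I would assemble first are the compactly supported cohomology of a single affine cell, namely $H_c^{2k}(\A^k)=\Z$ with all other groups zero (immediate from $\A^k\cong\R^{2k}$ and the standard computation of $H_c^*(\R^n)$), together with the additivity $H_c^*(\coprod_j X_j)=\bigoplus_j H_c^*(X_j)$ over finite disjoint unions. Combining these, for any finite disjoint union $V$ of affine cells the group $H_c^*(V)$ is free, concentrated in even degrees, with the rank of $H_c^{2k}(V)$ equal to the number of $k$-dimensional cells in $V$.

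For the base case, $Y_0=Y_0\setminus Y_{-1}$ (taking $Y_{-1}=\emptyset$) is itself a finite disjoint union of affine cells, so the claim holds for $Y_0$ by the observation above. For the inductive step, suppose the statement holds for $Y_{i-1}$, and regard $Y_{i-1}$ as a closed subvariety of $Y_i$ with open complement $V_i:=Y_i\setminus Y_{i-1}$, a finite disjoint union of affine cells. The long exact sequence then reads
\[
\cdots \to H_c^j(V_i)\to H_c^j(Y_i)\to H_c^j(Y_{i-1})\to H_c^{j+1}(V_i)\to\cdots.
\]
By the inductive hypothesis and the base facts, both $H_c^*(V_i)$ and $H_c^*(Y_{i-1})$ vanish in odd degrees. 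Hence every connecting map $H_c^{2k-1}(Y_{i-1})\to H_c^{2k}(V_i)$ has zero source, and every map $H_c^{2k}(Y_{i-1})\to H_c^{2k+1}(V_i)$ has zero target, so the long exact sequence collapses into short exact sequences
\[
0\to H_c^{2k}(V_i)\to H_c^{2k}(Y_i)\to H_c^{2k}(Y_{i-1})\to 0,
\]
while the odd-degree groups of $Y_i$ vanish. Since $H_c^{2k}(Y_{i-1})$ is free, this sequence splits, giving $H_c^{2k}(Y_i)\cong H_c^{2k}(V_i)\oplus H_c^{2k}(Y_{i-1})$, which is free of rank equal to the number of $k$-cells in $V_i$ plus the number of $k$-cells in $Y_{i-1}$, i.e.\ the number of $k$-cells in $Y_i$. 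This closes the induction, and taking $i=d$ yields the stated formula for $Y$.

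The only genuine subtlety, and the step I would verify most carefully, is the vanishing of the odd-degree connecting maps that allows the long exact sequence to break into split short exact sequences: this is precisely where the even-concentration of the cohomology of affine cells is used, and it is what forces freeness and even-concentration to propagate up the entire filtration. Everything else is bookkeeping on cell counts. I would also note that the statement is exactly \cite{F}, 19.1.11, so within the paper one may simply cite it; the inductive argument above is the standard justification behind that reference.
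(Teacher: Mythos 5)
Your proof is correct. Note, however, that the paper contains no proof of this lemma at all: it is stated purely as a citation to \cite{F}, 19.1.11, so there is no internal argument to compare yours against. What you have written is the standard inductive argument that underlies that reference, and it is complete: the computation $H_c^{2k}(\A^k)=\Z$ (with all other groups vanishing), additivity over finite disjoint unions, the long exact sequence in compactly supported cohomology for the closed pair $Y_{i-1}\subset Y_i$ with open complement $V_i=Y_i\setminus Y_{i-1}$, and the observation that even-concentration plus freeness makes the sequence collapse into split short exact sequences are exactly the right ingredients, and you correctly identify the collapse of the connecting maps as the one point needing care. One small remark worth making: Fulton's Example 19.1.11 is actually phrased in terms of Borel--Moore homology and the cycle map from Chow groups, whereas the lemma as stated in this paper uses compactly supported cohomology; your direct argument in $H_c^*$ is therefore, if anything, better adapted to the statement as given here than the literal cited source, and it avoids any appeal to duality to translate between the two theories. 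The only hypothesis you use implicitly and could flag explicitly is that each $Y_{i-1}$ is closed in $Y_i$ (guaranteed by the definition of a paving), which is what licenses the long exact sequence.
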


It is well known that the flag variety $\B$ is paved by affines.  Apply the Bruhat decomposition to write $\B=\bigsqcup_{w\in W} X_w$ where $X_w=B\dot{w}B/B$ is the Schubert cell indexed by $w\in W$.  The paving of $\B$ is given by 
		\[
				\B_i=\bigsqcup_{l(w)=i} \overline{X_w}
		\]  
where $l$ indicates the length function on $W$.  Since $\overline{X_w}=\bigsqcup_{y\leq w} X_y$, where $\leq$ denotes the Bruhat order and $X_w\cong \C^{l(w)}$, this paving is affine.

Let $S\in \fg$ be a semisimple element.  To show that the semisimple Hessenberg variety $\B(S,H)$ is paved by affines, we consider the intersections $\B_i\cap \B(S,H)$.  Since the $\B_i$ pave $\B$, these intersections pave $\B(S,H)$ and we have only to show that 
		\[
				\B_i\cap \B(S,H)-\B_{i-1}\cap\B(S,H)=\bigsqcup_{w\in W;\,l(w)=i} X_w\cap \B(S,H) 
		\]
is a finite disjoint union of affine spaces.  In other words, it is enough show that each $X_w\cap \B(S,H)$ is isomorphic to affine space for all $w\in W$.  This result can be found in Theorem 5.4 and Remark 5.6 of \cite{P}.  To calculate the Betti numbers of $\B(S,H)$ using Lemma \ref{betti} we need an understanding of $\dim(X_w\cap \B(S,H))$ for all $w\in W$.

Let $M=Z_G(S)$, the centralizer of $S$ in $G$.  We fix a basis so that $S\in \ft$ and $M$ is a standard Levi subgroup of $G$.  Denote its Lie algebra by $\fm$, and let $Q=MU_Q$ denote the corresponding standard parabolic subgroup, with Lie algebra $\fq=\fm\oplus \fu_Q$.  Recall that given a standard Levi subgroup $M$ of $G$, we write $W_M$ to denote the subgroup of the Weyl group associated to $M$.  Let
		\[
				W^M=\{ v\in W : \Phi_v\subseteq \Phi(\fu_Q) \}
		\]  
where $\Phi_w=\{ \gamma\in \Phi^+ : w^{-1}(\gamma)\in \Phi^- \}$ for $w\in W$.  The elements of $W^M$ form a set of minimal coset representatives for $W_M \backslash W$ since each $w\in W$ can be written uniquely as $w=yv$ with $y\in W_M$, $v\in W^M$, and so that $l(w)=l(y)+l(v)$.  Consequently, $\Phi_w=\Phi_y \sqcup y(\Phi_v)$.

\begin{lem}\label{dimen} (\cite{P}, Corollary 5.8(1))  Fix a Hessenberg space $H$ with respect to $\fb$.  Then for all $w=yv$, where $y\in W_M$ and $v\in W^M$, $X_w\cap \B(S,H)$ is nonempty and
		\[
				\dim(X_w\cap \B(S,H))=| \Phi_y |+| \Phi_v \cap v(\Phi_H^-)|.
		\]
\end{lem}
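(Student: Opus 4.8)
The plan is to realize $X_w\cap\B(S,H)$ inside the Schubert cell $X_w\cong U_w=\prod_{\beta\in\Phi_w}U_\beta$ as the locus cut out by the Hessenberg condition, and then to exhibit this locus as the graph of a polynomial map over an affine space of the asserted dimension. Every point of $X_w$ is $u\dot w\cdot\fb$ for a unique $u\in U_w$, and $u\dot w\cdot\fb\in\B(S,H)$ precisely when $\dot w^{-1}u^{-1}\cdot S\in H$. Taking $u=e$ gives $\dot w^{-1}\cdot S=w^{-1}(S)\in\ft\subseteq H$, so $\dot w\cdot\fb$ always lies in the intersection; this settles nonemptiness, and we may focus on the dimension count.

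The first reduction is to exploit $M=Z_G(S)$ in order to peel off the Levi directions. Using $\Phi_w=\Phi_y\sqcup y(\Phi_v)$ with $\Phi_y\subseteq\Phi(\fm)$ and $y(\Phi_v)\subseteq\Phi(\fu_Q)$, I would order the product so that the roots of $\Phi_y$ come first, factoring $u=u_1u_2$ with $u_1\in\prod_{\beta\in\Phi_y}U_\beta\subseteq U\cap M$ and $u_2\in\prod_{\beta\in y(\Phi_v)}U_\beta\subseteq U_Q$. Since $u_1$ centralizes $S$, one has $u^{-1}\cdot S=u_2^{-1}u_1^{-1}\cdot S=u_2^{-1}\cdot S$, so the $|\Phi_y|$ parameters of $u_1$ are completely unconstrained; moreover $u_2^{-1}\cdot S-S\in\fu_Q$ because $[\fu_Q,\fq]\subseteq\fu_Q$. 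This already accounts for $|\Phi_y|$ free directions and reduces the problem to the $|\Phi_v|$ parameters of $u_2$.

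Writing $u_2^{-1}\cdot S-S=\sum_{\alpha\in\Phi(\fu_Q)}p_\alpha E_\alpha$, the operator $\dot w^{-1}$ sends each $E_\alpha$ to a nonzero multiple of $E_{w^{-1}(\alpha)}$ while fixing $\ft$, so the Hessenberg condition reads $p_\alpha=0$ for every $\alpha\in\Phi(\fu_Q)$ with $w^{-1}(\alpha)\notin\Phi_H$. Using $w^{-1}=v^{-1}y^{-1}$ and the fact that $y\in W_M$ stabilizes $\Phi(\fu_Q)$, I would split these roots: if $\alpha=y(\delta)$ with $\delta\in\Phi_v$, then $w^{-1}(\alpha)=v^{-1}(\delta)\in\Phi^-$, and $v^{-1}(\delta)\notin\Phi_H$ is equivalent to $\delta\notin v(\Phi_H^-)$; if instead $y^{-1}(\alpha)\notin\Phi_v$, then $w^{-1}(\alpha)\in\Phi^+\subseteq\Phi_H$ and no condition arises. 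Thus the constraints are indexed exactly by $\Phi_v\setminus v(\Phi_H^-)$, one for each parameter direction $s_{y(\delta)}$, and the surviving free directions number $|\Phi_v\cap v(\Phi_H^-)|$.

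The main obstacle is to turn this bookkeeping into an honest affine parameterization, and for that I would establish a triangularity in the height of roots. Each $y(\delta)\in\Phi(\fu_Q)$ has $y(\delta)(S)\neq 0$ (as $y(\delta)\notin\Phi(\fm)$), so $p_{y(\delta)}=y(\delta)(S)\,s_{y(\delta)}+(\text{higher-order terms})$, where the higher-order terms are monomials in the $s_\beta$ whose indexing roots sum to $y(\delta)$ and hence each have strictly smaller height; in particular $s_{y(\delta)}$ itself occurs only in the displayed linear term. Processing the constrained equations in increasing order of height, each $p_{y(\delta)}=0$ solves linearly for $s_{y(\delta)}$ in terms of lower-height parameters, and substituting already-solved expressions inductively writes every constrained $s_{y(\delta)}$ ($\delta\in\Phi_v\setminus v(\Phi_H^-)$) as a polynomial in the free variables $s_{y(\delta)}$ ($\delta\in\Phi_v\cap v(\Phi_H^-)$). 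This exhibits $X_w\cap\B(S,H)$ as the graph of a polynomial map, an affine space of dimension $|\Phi_y|+|\Phi_v\cap v(\Phi_H^-)|$. Verifying the triangular structure carefully --- especially that no higher-order term reintroduces a constrained variable of height at least that of the equation being solved --- is the crux of the argument.
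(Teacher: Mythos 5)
Your proposal is correct, and it supplies something the paper itself does not: the paper gives no proof of Lemma \ref{dimen}, citing it from \cite{P} (Corollary 5.8(1), resting on Theorem 5.4 and Remark 5.6 there), so your argument is best read as a reconstruction of the mechanism of that reference. The structure is sound at every pivot. The factorization $u=u_1u_2$ along $\Phi_w=\Phi_y\sqcup y(\Phi_v)$ is legitimate because $W_M$ stabilizes $\Phi(\fu_Q)$, hence $y(\Phi_v)\subseteq\Phi(\fu_Q)$ and $U_w=(U_w\cap M)(U_w\cap U_Q)$ is directly spanned; since $u_1\in M=Z_G(S)$, the $|\Phi_y|$ parameters drop out of the condition exactly as you say. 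Your indexing of the constraints is right: every $\alpha\in\Phi(\fu_Q)$ with $w^{-1}(\alpha)\notin\Phi_H$ satisfies $y^{-1}(\alpha)\in\Phi_v$ (otherwise $w^{-1}(\alpha)\in\Phi^+\subseteq\Phi_H$), so each constrained equation has its own pivot variable $s_{y(\delta)}$ with linear coefficient $\pm y(\delta)(S)\neq 0$, nonvanishing precisely because $y(\delta)\notin\Phi(\fm)$ and $\Phi(\fm)$ consists exactly of the roots annihilating $S$. The triangularity worry you flag as the crux does resolve cleanly: any monomial of degree at least two in $p_\alpha$ has its indexing roots (with multiplicity, since powers $\mathrm{ad}(E_\beta)^k$ occur) summing to $\alpha$, and positive roots have additive height, so every variable appearing has height strictly below $\mathrm{ht}(\alpha)$; in particular $s_\alpha$ cannot recur in its own equation, and solving in increasing height with inductive substitution is well-founded. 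The outcome is in fact stronger than the dimension formula: it exhibits $X_w\cap\B(S,H)$ as the graph of a polynomial map, i.e., an affine space of dimension $|\Phi_y|+|\Phi_v\cap v(\Phi_H^-)|$, which is exactly the content of Theorem 5.4 and Remark 5.6 of \cite{P} that the paper invokes, with nonemptiness immediate from $\dot{w}^{-1}\cdot S\in\ft\subseteq H$.
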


Using Lemma \ref{betti}, we can now give a criteria for $\B(S,H)$ to be connected.  

\begin{thm}\label{connS}  If $S\in \fg$ is a semisimple element which is not in the center of $\fg$ and $H\subset \fg$ is a Hessenberg space with respect to $\fb$, then $\B(S,H)$ is connected if and only if $-\Delta\subseteq \Phi_H^-$.
\end{thm}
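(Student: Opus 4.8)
The plan is to read connectedness off the affine paving. Because $\B(S,H)$ is a closed subvariety of $\B$ it is projective, so $H^0(\B(S,H)) = H^0_c(\B(S,H))$, which by Lemma \ref{betti} is free of rank $n_0$, the number of zero-dimensional cells $X_w\cap\B(S,H)$. Since the rank of $H^0$ counts connected components, $\B(S,H)$ is connected exactly when $n_0 = 1$. Writing $w = yv$ with $y\in W_M$ and $v\in W^M$, Lemma \ref{dimen} gives $\dim(X_w\cap\B(S,H)) = |\Phi_y| + |\Phi_v\cap v(\Phi_H^-)|$, so this cell is a point iff $y = e$ and $\Phi_v\cap v(\Phi_H^-) = \emptyset$. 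Hence the points of the paving are indexed by
\[
Z := \{v\in W^M : \Phi_v\cap v(\Phi_H^-) = \emptyset\},
\]
and as $e\in Z$ always, the theorem reduces to the assertion that $Z = \{e\}$ if and only if $-\Delta\subseteq\Phi_H^-$.

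Next I would rewrite the condition defining $Z$. A direct check shows $\Phi_v\cap v(\Phi_H^-) = \{\gamma\in\Phi^+ : v^{-1}(\gamma)\in\Phi_H^-\}$, which is empty precisely when $v(\Phi_H^-)\subseteq\Phi^-$, i.e. when the inversion set $\Phi_{v^{-1}} = \{\gamma\in\Phi^+ : v(\gamma)\in\Phi^-\}$ avoids $\{-\beta : \beta\in\Phi_H^-\}$. Setting $\Psi := \{\gamma\in\Phi^+ : -\gamma\notin\Phi_H\}$, the condition becomes $\Phi_{v^{-1}}\subseteq\Psi$. Since $H$ is a $\fb$-submodule, $\Psi$ is an order filter (up-set) in the root poset, and $-\Delta\subseteq\Phi_H^-$ says exactly that $\Psi$ contains no simple root. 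For the implication $(\Leftarrow)$, suppose $\Psi$ has no simple root. Every $v\neq e$ has a simple root in $\Phi_{v^{-1}}$ — namely the simple root of the final letter of a reduced word for $v$, which $v$ sends into $\Phi^-$ — so $\Phi_{v^{-1}}\not\subseteq\Psi$, forcing $Z = \{e\}$.

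For $(\Rightarrow)$ I would assume $-\alpha_0\notin\Phi_H$ for some $\alpha_0\in\Delta$, so $\alpha_0\in\Psi$, and exhibit $v\neq e$ in $Z$. If $\alpha_0\notin\Delta_M$ (the simple roots of $M$), then $\alpha_0\in\Phi(\fu_Q)$, so $s_{\alpha_0}\in W^M$ and $\Phi_{s_{\alpha_0}} = \{\alpha_0\}\subseteq\Psi$; take $v = s_{\alpha_0}$. The substantive case is $\alpha_0\in\Delta_M$. Since $S$ is not central we have $\Delta_M\subsetneq\Delta$, and using that the Dynkin diagram is connected I would choose a path $\alpha_0 = \delta_0,\delta_1,\dots,\delta_r$ in it with $\delta_0,\dots,\delta_{r-1}\in\Delta_M$ and $\delta_r\in\Delta\setminus\Delta_M$, and set $v = s_{\delta_r}\cdots s_{\delta_1}s_{\delta_0}$. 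This word is reduced, and the standard inversion-set formula gives
\[
\Phi_{v^{-1}} = \{\delta_0,\ s_{\delta_0}(\delta_1),\ \dots,\ s_{\delta_0}\cdots s_{\delta_{r-1}}(\delta_r)\},\qquad \Phi_v = \{\delta_r,\ s_{\delta_r}(\delta_{r-1}),\ \dots,\ s_{\delta_r}\cdots s_{\delta_1}(\delta_0)\}.
\]
Each root listed in $\Phi_{v^{-1}}$ is $\delta_0$ plus a nonnegative combination of $\delta_1,\dots,\delta_r$, so it dominates $\alpha_0 = \delta_0$ and lies in the filter $\Psi$; each root listed in $\Phi_v$ involves $\delta_r\notin\Delta_M$ with positive coefficient, so it lies outside $\Phi_M$, i.e. in $\Phi(\fu_Q)$, whence $v\in W^M$. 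Thus $v\in Z\setminus\{e\}$ and $\B(S,H)$ is disconnected.

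I expect this path construction to be the main obstacle. The difficulty is that $v\in W^M$ is a constraint on $\Phi_v$ whereas being a point of the paving is a constraint on $\Phi_{v^{-1}}$, and these inversion sets differ; the element $s_{\delta_r}\cdots s_{\delta_0}$ is engineered so that $\Phi_v$ escapes into $\fu_Q$ through $\delta_r$ while $\Phi_{v^{-1}}$ stays in the filter generated by $\alpha_0$. This is also the step that genuinely uses connectedness of the Dynkin diagram: if $\alpha_0$ lay in a component contained in $\Delta_M$, the up-set of $\alpha_0$ would never reach $\Phi(\fu_Q)$ and no nontrivial $v\in W^M$ would belong to $Z$, so one really needs $\Phi$ irreducible here (for reductive $\fg$ one applies the argument to each simple factor in which $S$ is non-central). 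The only remaining routine point is checking, in the non-simply-laced types, that the intermediate roots $s_{\delta_0}\cdots s_{\delta_{k-1}}(\delta_k)$ and $s_{\delta_r}\cdots s_{\delta_{k+1}}(\delta_k)$ still involve $\delta_0$, respectively $\delta_r$, with positive coefficient, where the reflections can produce coefficients larger than one.
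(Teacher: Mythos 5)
Your proposal is correct, and while its skeleton coincides with the paper's --- both reduce via Lemma \ref{betti} and Lemma \ref{dimen} to showing that the identity is the only zero-dimensional cell exactly when $-\Delta\subseteq\Phi_H^-$, and your $(\Leftarrow)$ direction (a nontrivial $v$ always has a simple root in $\Phi_{v^{-1}}$) and your case $\alpha_0\in\Phi(\fu_Q)$ (take $v=s_{\alpha_0}$) are the paper's arguments verbatim --- the key step in the hard direction is genuinely different. For $\alpha_0\in\Phi(\fm)$ the paper is non-constructive: it shows $\Phi_{\geq\alpha_0}$ and its complement are both closed under addition, invokes Kostant (\cite{K}, Proposition 5.10) to obtain $w$ with $\Phi_w=\Phi_{\geq\alpha_0}$, uses the highest root $\theta\geq\alpha_0$, $\theta\in\Phi(\fu_Q)$ to force $w^{-1}\notin W_M$, and extracts $v$ from the factorization $w^{-1}=yv$ via $v^{-1}(\Phi_v)\subseteq w(\Phi_{w^{-1}})=-\Phi_{\geq\alpha_0}$; your order-filter property of $\Psi$ is in substance the paper's verification that $-\Phi_{\geq\alpha_0}\cap\Phi_H^-=\emptyset$ (via \cite{H}, \S 10.2 Lemma A), so that ingredient is shared, though in a final write-up you should spell out the saturated-chain fact (comparable positive roots are joined by simple-root steps through roots) that your ``direct check'' needs, or simply verify membership in $\Psi$ only for your explicit roots $s_{\delta_0}\cdots s_{\delta_{k-1}}(\delta_k)$, whose $\delta_0$-coefficient is a product of positive integers, as you note. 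What your explicit word $v=s_{\delta_r}\cdots s_{\delta_1}s_{\delta_0}$ buys is twofold: it exhibits the disconnecting point-cell concretely instead of through an existence theorem, and it exposes an implicit hypothesis in the paper's step ``we may assume $\theta\in\Phi(\fu_Q)$'' together with ``$\theta\geq\alpha$,'' which requires $\Phi$ irreducible. Your closing caveat is exactly right and is in fact sharper than the paper: for reducible $\fg$ the equivalence as stated can fail --- e.g.\ $\fg=\mathfrak{sl}_2\times\mathfrak{sl}_2$, $S=(0,h)$ with $h\neq 0$, and $\Phi_H^-=\{-\alpha_2\}$ gives $\B(S,H)=\B\cong\P^1\times\P^1$ connected while $-\alpha_1\notin\Phi_H^-$ --- so Theorem \ref{connS} must be read per simple factor in which $S$ is non-central, precisely as you say, and your Dynkin-path construction is the argument adapted to that correct generality.
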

\begin{proof}  Recall that the dimension of the 0-cohomology group of any topological space is equal to the number of connected components of that space.  Therefore, by Lemma \ref{betti}, it is enough to show that for all $w\in W$ not equal to the identity, $\dim (X_w\cap \B(S,H))>0$ if and only if $-\Delta \subseteq \Phi_H^-$.  To prove this, we'll use the fact that $w^{-1}(\Phi_w)=-\Phi_{w^{-1}}$.

($\Rightarrow$) Suppose there exists $-\alpha\in -\Delta$ such that $-\alpha\notin \Phi_H^-$.  Our goal is to find some $v\in W^M - \{ e \}$ such that
		\begin{eqnarray}\label{star}
				v^{-1}(\Phi_v)\cap \Phi_H^-=\emptyset
		\end{eqnarray}
since then $\dim(X_v\cap \B(S,H))=|\Phi_v\cap v(\Phi_H^-)|=|v^{-1}(\Phi_v) \cap \Phi_H^-|=0$ by Lemma \ref{dimen}, implying that $\B(S,H)$ is not connected.  There are two cases we must consider.

First, if $\alpha\in \Phi(\fu_{Q})$ then we can take $v=s_{\alpha}\in W^M$ where $s_{\alpha}$ is the simple reflection corresponding to $\alpha\in\Delta$.  In this case, $s_{\alpha}^{-1}(\Phi_{s_\alpha})=\{-\alpha\}$ so equation (\ref{star}) is satisfied.

Second, say $\alpha\in \Phi(\fm)$.  We identify the element $v\in W^M$ as follows.  Let
		\[
				\Phi_{\geq \alpha}=\{ \gamma\in \Phi^+:\gamma\geq \alpha \}
		\]
where $\geq$ is the partial ordering on positive roots defined in \cite{H}, \S 10.1.  Note that $-\Phi_{\geq\alpha}\cap \Phi_H^-=\emptyset$.  Indeed, if $\gamma\in \Phi_{\geq \alpha}$, write $\gamma=\alpha_1+\cdots + \alpha_k$ with $\alpha_i\in \Delta$ (not necessarily distinct) so that each partial sum $\alpha_1+\cdots+\alpha_i$ is a positive root (see \cite{H}, \S10.2 Lemma A).  Since $\gamma\in \Phi_{\geq \alpha}$, there exists some $j\in \{1,...,k\}$ so that $\alpha_j=\alpha$.  Now, if $-\gamma\in \Phi^-_H$ then $-\gamma+\alpha_k =-\alpha_1-\cdots-\alpha_{k-1}\in \Phi_H^-$ by definition of the Hessenberg space.  Continuing in this fashion, we may assume $-\alpha_1-\cdots -\alpha_j\in \Phi_H^-$.  But this forces
		\[
				-\alpha=-\alpha_j=(-\alpha_1-\cdots-\alpha_j)+(\alpha_1+\cdots +\alpha_{j-1})\in \Phi_H^-
		\]
since $\alpha_1+\cdots + \alpha_{j-1}\in \Phi^+$, which is a contradiction since by assumption $-\alpha \notin \Phi_H^-$.  In addition, since $S$ is not an element of the center of $\fg$, $\Phi_M \subsetneqq \Phi$.  In particular, we may assume $\theta$, the highest root in $\Phi^+$, is an element of $\Phi(\fu_Q)$.  Since $\theta \geq \alpha$, we conclude that $\Phi(\fu_Q)\cap \Phi_{\geq \alpha}\neq \emptyset$.  

Note also that $\Phi_{\geq \alpha}$ is closed with respect to addition, as is its compliment $\Phi_{\geq\alpha}^c:=\Phi^+-\Phi_{\geq \alpha}$.  This is clear since $\Phi_{\geq\alpha}$ are the roots associated to the nilpotent ideal of $\fb$ in the Levi decomposition of the standard parabolic associated to the subset $\Delta-\{\alpha\}$ of simple roots.


Since $\Phi_{\geq\alpha}$ and $\Phi_{\geq\alpha}^c$ are both closed with respect to addition, by \cite{K}, Proposition 5.10, there exists some $w\in W$ so that $\Phi_w=\Phi_{\geq \alpha}$.  Consider $w^{-1}=yv\in W$ where $y\in W_M$ and $v\in W^M$.  We must have $w^{-1}\notin W_M$ since if $w\in W_M$, then $\emptyset=\Phi(\fu_{Q})\cap \Phi_w=\Phi(\fu_Q)\cap \Phi_{\geq\alpha}$, which is a contradiction.  Therefore $v\neq e$.  We claim that this element $v\in W^M$ satisfies the property in equation (\ref{star}).  Write $v=y^{-1}w^{-1}$.  We see that
		\[
				\Phi_{w^{-1}}=\Phi_y \sqcup y(\Phi_v) \Rightarrow y(\Phi_v)=\Phi_{w^{-1}}-\Phi_y\Rightarrow \Phi_v									=y^{-1}(\Phi_{w^{-1}}-\Phi_y),
		\]
and therefore,
		\[
				v^{-1}(\Phi_v)=v^{-1}y^{-1}(\Phi_{w^{-1}}-\Phi_y)\subseteq v^{-1}y^{-1}(\Phi_{w^{-1}})=w(\Phi_{w^{-1}})								=-\Phi_w=-\Phi_{\geq\alpha}.
		\]
Since $-\Phi_{\geq \alpha}\cap \Phi_H^-=\emptyset$, we get that $v^{-1}(\Phi_v)\cap \Phi_H^-=\emptyset$, as desired.  We have now shown that if $-\Delta \nsubseteq \Phi_H^-$, then $\B(S,H)$ is not connected.

($\Leftarrow$)  Alternatively, say $\B(S,H)$ is not connected.  Then there exists some $w\in W$ so that $\dim (X_w\cap \B(S,H))=0$ and $w$ is not the identity.  By Lemma \ref{dimen} this can be the case only if $w\in W^M$ and
		\[
				w(\Phi_H^-)\cap \Phi_w=\emptyset\Rightarrow \Phi_H^-\cap w^{-1}(\Phi_w)=\emptyset											\Rightarrow \Phi_H^-\cap -\Phi_{w^{-1}}=\emptyset.
		\]
But since $w^{-1}\neq e$, there must be some simple root $\alpha\in \Phi_{w^{-1}}$.  Since $-\alpha\in -\Phi_{w^{-1}}$, we get $-\alpha\notin \Phi_H^-$.  Therefore, if $\B(S,H)$ is not connected, $-\Delta\nsubseteq \Phi_H^-$, and we have proven the Theorem.
\end{proof}

\begin{rem}  If the semisimple element $S\in \fg$ is an element of the center of $\fg$ then this criterion no longer applies.  In this case, $\B(S,H)$ is the full flag variety and is therefore connected.
\end{rem}

Theorem \ref{connS} extends a similar result from the appendix of \cite{AT} which gives the same criterion for $\B(S,H)$ to be connected when $S\in \fg$ is regular semisimple.  It also extends Corollary 9(i) from \cite{dMPS} which gives an equivalent criterion for $\B(S,H)$ to be connected when $S$ is regular semisimple.


\section{Nilpotent Hessenberg Varieties} \label{Nilp}


In this section we show that any nilpotent Hessenberg variety is rationally connected.  Our methods are similar to those of Tits proving that any Springer variety is rationally connected (see \cite{J}, \S10).  Namely, both arguments rely on the selection of a rational curve connecting any two points.  Our methods differ from those of Tits in that we must utilize non-simple root subgroups to do this for the Hessenberg variety.

One can also show that nilpotent Hessenberg varieties in certain cases are connected by calculating Betti numbers via an affine paving as we did above for semisimple varieties.  For example, when $G=GL_n(\C)$, Tymoczko showed in \cite{T} that the intersections $X_w\cap \B(N,H)$ for $N\in \fg$ nilpotent are affine with dimensions given by certain fillings of the Young diagram associated to $N$.  In \cite{I}, Iveson considers these fillings in greater detail and proves that $\dim(X_w\cap \B(N,H))>0$ for all $w\in W-\{e\}$.  In contrast, the methods below show each $\B(N,H)$ is rationally connected for all types and does not depend on an affine paving or choice of element $N\in \fg$.

Fix a a representative of $N$ so that $N=\sum_{\gamma\in \Phi_N} E_{\gamma} \in \fu$ for some subset, $\Phi_N$, of positive roots.  For each $w\in W$ consider the following subsets of positive roots:
		\begin{eqnarray*}
				\Phi_w&=& \{ \gamma\in \Phi^+ : w^{-1}(\gamma)\in \Phi^- \}\\
				\Phi_w^c&=& \Phi^+ - \Phi^w= \{ \gamma\in \Phi^+ : w^{-1}(\gamma)\in \Phi^+ \}.
		\end{eqnarray*}
We define corresponding subalgebras of $\fu$:
		\[
				\fu_w:=\bigoplus_{\gamma\in \Phi_w}\fg_{\gamma} 																	\mbox{  and  }  \fu_w^c:= \bigoplus_{\gamma\in \Phi_w^c} \fg_{\gamma}
		\]
so $\fu=\fu_w\oplus \fu_w^c$.

Let $H$ be a fixed Hessenberg space with respect to $\fb$ and recall that $\Phi_H=\Phi^+\sqcup \Phi_H^-$.  Note that:
		\begin{eqnarray*}
				w(\Phi_H)\cap \Phi^+ &=& (w(\Phi^+) \cap \Phi^+) \sqcup (w(\Phi_H^-) \cap \Phi^+)\\
							&=& \Phi_w^c \sqcup (w(\Phi_H^-) \cap \Phi^+ ). 
		\end{eqnarray*} 
In particular, we can state the following.

\begin{rem}\label{Remark 1}  For all $w\in W$, $\fu_w^c\subseteq \dot{w}\cdot H$.
\end{rem}

Now, let $\Phi_w^{m}=\{ \gamma\in \Phi_w : \alpha \leq \gamma \; \forall \alpha\in \Phi_w \}\subseteq \Phi_w$ where $\leq$ is the partial order on positive roots as in the proof of Theorem \ref{connS}.  

\begin{rem}\label{Remark 2}  Note that $\Phi_w^{m}\neq \emptyset$ for all $w\in W-\{e\}$ and by definition, if $\gamma'\in \Phi^+$ such that $\gamma < \gamma'$ for some $\gamma\in \Phi_w^{m}$ then $\gamma' \in \Phi_w^c$.
\end{rem}

\begin{lem}\label{LEM}  If $\dot{w}\cdot \fb\in \B(N,H)$, then $U_{\gamma}\dot{w}\cdot \fb\subset \B(N,H)$ for all $\gamma\in \Phi_w^m$.  
\end{lem}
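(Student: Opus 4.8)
The plan is to translate membership in $\B(N,H)$ into the Lie-algebra condition and then compute the effect of the one-parameter subgroup $U_\gamma$ directly. Since $G(N,H)$ is invariant under right multiplication by $B$, the condition $\dot{w}\cdot\fb\in\B(N,H)$ is equivalent to $\dot{w}^{-1}\cdot N\in H$, and for $u=\exp(tE_\gamma)\in U_\gamma$ the point $u\dot{w}\cdot\fb$ lies in $\B(N,H)$ precisely when $(u\dot{w})^{-1}\cdot N=\dot{w}^{-1}\cdot(u^{-1}\cdot N)\in H$. So the whole statement reduces to showing $\dot{w}^{-1}\cdot(u^{-1}\cdot N)\in H$ for every $t\in\C$.

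First I would expand the adjoint action via the exponential: $u^{-1}\cdot N=\exp(-t\,\mathrm{ad}\,E_\gamma)(N)=\sum_{k\geq 0}\frac{(-t)^k}{k!}(\mathrm{ad}\,E_\gamma)^k(N)$. Writing $N=\sum_{\delta\in\Phi_N}E_\delta$, the $k=0$ term is exactly $N$, while each higher term $(\mathrm{ad}\,E_\gamma)^k(E_\delta)$ is a scalar multiple of $E_{k\gamma+\delta}$ and vanishes unless $k\gamma+\delta$ is a root. Thus $u^{-1}\cdot N=N+Y$, where $Y$ is a linear combination of root vectors $E_\mu$ with $\mu$ of the form $k\gamma+\delta$, $k\geq 1$, $\delta\in\Phi_N$.

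The crux is to show $Y\in\fu_w^c$. For this I would observe that each such $\mu=k\gamma+\delta$ satisfies $\mu-\gamma=(k-1)\gamma+\delta$, a sum of positive roots, hence $\mu>\gamma$ in the partial order; and $\mu$ is a positive root whenever the corresponding bracket is nonzero. Since $\gamma\in\Phi_w^m$, Remark \ref{Remark 2} then forces $\mu\in\Phi_w^c$, so every nonzero term of $Y$ lies in $\fu_w^c$ and therefore $Y\in\fu_w^c$. This maximality step is the main obstacle: it is exactly what makes the argument go through, and it explains why non-simple root subgroups must be used, since for a non-maximal $\gamma$ some bracket term could land back in $\fu_w$, whose $\dot{w}^{-1}$-image lies among negative root spaces and need not sit inside $H$.

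Finally I would assemble the pieces using linearity of $H$: by hypothesis $\dot{w}^{-1}\cdot N\in H$, and by Remark \ref{Remark 1} we have $\fu_w^c\subseteq\dot{w}\cdot H$, so $\dot{w}^{-1}\cdot Y\in H$. Hence $\dot{w}^{-1}\cdot(u^{-1}\cdot N)=\dot{w}^{-1}\cdot N+\dot{w}^{-1}\cdot Y\in H$ for all $t\in\C$, which gives $U_\gamma\dot{w}\cdot\fb\subset\B(N,H)$, as claimed.
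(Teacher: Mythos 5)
Your proposal is correct and follows essentially the same route as the paper's own proof: reduce to the condition $u^{-1}\cdot N\in\dot{w}\cdot H$ (equivalently $\dot{w}^{-1}\cdot(u^{-1}\cdot N)\in H$), expand $\exp(-t\,\mathrm{ad}\,E_\gamma)(N)$ so every correction term is a root vector $E_{\mu}$ with $\mu>\gamma$, and invoke the maximality of $\gamma\in\Phi_w^m$ (Remark \ref{Remark 2}) together with $\fu_w^c\subseteq\dot{w}\cdot H$ (Remark \ref{Remark 1}). Your identification of the maximality step as the crux, and your explanation of why a non-maximal $\gamma$ would fail, match the paper's reasoning exactly.
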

\begin{proof}  Indeed, let $u\in U_{\gamma}$ for $\gamma\in \Phi_w^m$.  The condition that $U_{\gamma}\dot{w}\cdot \fb\subset \B(N,H)$ is equivalent to showing $u^{-1}\cdot N\in \dot{w}\cdot H$ for all such $u$.  Write $u=exp(tE_{\gamma})$ with $t\in \C$.  Then
		\begin{eqnarray*}
				u^{-1}\cdot N &=& exp(-tE_{\gamma})\cdot N
						 = exp(ad_{-tE_{\gamma}})(N)\\
						&=& \sum_{i=0}^{\infty} \frac{ad_{-t E_{\gamma}}^i (N)}{i!}
						= N +\sum_{\gamma'\in \Phi(\gamma,N)} d_{\gamma'}E_{\gamma'}
		\end{eqnarray*}
where $\Phi(\gamma,N)=\{ \gamma'\in \Phi^+ : \gamma'=c\gamma+\alpha,\, c\in\{1, 2, 3\},\, \alpha\in\Phi_N \}$ and $d_{\gamma'}\in \C$.  Note that this is a finite expression, since $E_{\gamma}$ is a nilpotent element of $\fg$.  Also, all $\gamma'\in \Phi(\gamma,N)$ have the property that $\gamma < \gamma'$.  In particular, by Remark \ref{Remark 2}, $\gamma'\in \Phi_w^c$ so $E_{\gamma'}\in \fu_w^c\subseteq \dot{w}\cdot H$ by Remark \ref{Remark 1}.  Therefore, since $N\in \dot{w}\cdot H$,
		\[
				u^{-1}\cdot N=N +\sum_{\gamma'\in \Phi(\gamma,N)} d_{\gamma'}E_{\gamma'}\in \dot{w}\cdot H,
		\]
and $U_{\gamma}\dot{w}\cdot \fb\subset \B(N,H)$.
\end{proof}

Since $\Phi_w^m\neq \emptyset$, we have shown that for any nilpotent element $N\in \fu$, if $\dot{w}\cdot \fb\in \B(N,H)$ then there exists some $\gamma\in \Phi_{w}$ so that $U_{\gamma}\dot{w}\cdot \fb\subset \B(N,H)$.

\begin{thm} \label{connN}  The Hessenberg variety $\B(N,H)$ corresponding to any nilpotent element $N\in \fg$ and any Hessenberg space $H$ is rationally connected.
\end{thm}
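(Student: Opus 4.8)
The plan is to show that $\B(N,H)$ is rationally connected by constructing, for any two points in the variety, a chain of rational curves connecting them. Since $\B(N,H)$ is projective, it suffices (by the standard characterization of rational connectedness for proper varieties) to connect any two points by a connected chain of rational curves lying inside $\B(N,H)$. My strategy is to reduce every point to a single distinguished point—the standard Borel $\fb$ itself, which lies in $\B(N,H)$ since $N\in\fu\subset\fb\subset H$—and then invoke a general principle: if every point can be joined to a fixed point by a chain of rational curves, the variety is rationally connected.

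The key device is Lemma~\ref{LEM} together with the Bruhat decomposition. Every point of $\B(N,H)$ lies in some Schubert cell $X_w\cap\B(N,H)$, so it has the form $b\dot w\cdot\fb$ for some $b\in B$. The plan is first to handle the torus-fixed points $\dot w\cdot\fb$ by downward induction on the length $l(w)$. Given $\dot w\cdot\fb\in\B(N,H)$ with $w\neq e$, Remark~\ref{Remark 2} guarantees $\Phi_w^m\neq\emptyset$, so I pick $\gamma\in\Phi_w^m$; by Lemma~\ref{LEM} the whole line $U_\gamma\dot w\cdot\fb$ sits inside $\B(N,H)$. This line is a rational curve (an affine $\A^1$, whose closure in $\B$ is a $\P^1$); I would argue its closure meets $\B(N,H)$ in a rational curve connecting $\dot w\cdot\fb$ to the strictly shorter fixed point $\dot s_\gamma\dot w\cdot\fb$, or more carefully to a point of the form $\dot{w'}\cdot\fb$ with $l(w')<l(w)$. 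Iterating terminates at $e$, giving a chain from $\dot w\cdot\fb$ down to $\fb$.

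Next I would reduce an arbitrary point $b\dot w\cdot\fb$ to the torus-fixed point $\dot w\cdot\fb$. Here I use the $T$-action: the cocharacter coming from the dominant regular coweight contracts the open Schubert cell toward its fixed point. Since $N\in\fu$ and $H$ is $\fb$-stable (hence $T$-stable), the variety $\B(N,H)$ is preserved by the relevant one-parameter subgroup of $T$, so the limit $\lim_{t\to 0}\lambda(t)\cdot(b\dot w\cdot\fb)=\dot w\cdot\fb$ traces a rational curve inside $\B(N,H)$ joining $b\dot w\cdot\fb$ to $\dot w\cdot\fb$. Composing this with the chain from the previous paragraph connects any point to $\fb$, and since $\fb$ is a common endpoint, any two points are connected through $\fb$.

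**The hard part will be** the careful bookkeeping at the endpoints of the rational curves produced by Lemma~\ref{LEM}: one must verify that the closure of the affine line $U_\gamma\dot w\cdot\fb$ inside $\B$ actually limits to another point \emph{of $\B(N,H)$} with strictly smaller length, and that this point is again $T$-fixed so the induction can proceed. Concretely, as $t\to\infty$ in $u=\exp(tE_\gamma)$, the point $U_\gamma\dot w\cdot\fb$ converges to $\dot s_\gamma\dot w\cdot\fb$ in the flag variety; establishing that this limit lies in $\B(N,H)$ and that $l(s_\gamma w)<l(w)$ (using $\gamma\in\Phi_w$, so $\gamma$ is an inversion of $w^{-1}$) is the crux. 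The second genuine subtlety is confirming that $\B(N,H)$ is stable under the contracting torus action in the second step, which follows from $\ft$-stability of $H$ and $N\in\fu$, but should be stated explicitly. Once these closure and stability facts are nailed down, the rational connectedness follows formally from chaining the curves together through the common point $\fb$.
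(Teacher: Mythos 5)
The decisive gap is in your third step. You claim that $\B(N,H)$ is preserved by a regular dominant one-parameter subgroup $\lambda$ of $T$, justifying this by the $\ft$-stability of $H$; but the torus acts on the wrong object. For $t\in T$ and $g\cdot\fb\in\B(N,H)$ one has $tg\cdot\fb\in\B(N,H)$ if and only if $g^{-1}\cdot\bigl(t^{-1}\cdot N\bigr)\in H$, so what is needed is not stability of $H$ under $T$ but that $t^{-1}\cdot N$ again cuts out the same variety, i.e.\ that $\lambda(t)^{-1}\cdot N\in\C^{*}N$. Writing $N=\sum_{\gamma\in\Phi_N}E_\gamma$, this forces $\langle\gamma,\lambda\rangle$ to be constant on $\Phi_N$, which is incompatible with regularity of $\lambda$ as soon as $\Phi_N$ contains roots of different heights: for instance in $SL_3$ with $N=E_\alpha+E_{\alpha+\beta}$ one needs $\langle\beta,\lambda\rangle=0$. (The familiar fact that the Peterson variety carries only a one-dimensional torus action, generated by $\rho^\vee$, is a symptom of exactly this phenomenon; the full-torus contraction you invoke simply is not available.) One can always find a \emph{dominant} cocharacter preserving $\B(N,H)$ -- e.g.\ a Jacobson--Morozov cocharacter, with $\lambda(t)\cdot N=t^2N$ -- but such a $\lambda$ is not regular in general, so $\lim_{t\to0}\lambda(t)\,b\dot{w}\cdot\fb$ lands in the positive-dimensional fixed locus of $\lambda$ rather than at $\dot{w}\cdot\fb$, and your reduction of arbitrary points to $T$-fixed points does not get off the ground.

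The paper repairs exactly this step by conjugating the nilpotent instead of moving the point: given $u_0\dot{w}\cdot\fb\in\B(N,H)$ with $u_0\in U$, set $N_0=u_0^{-1}\cdot N\in\fu$, so that $\dot{w}\cdot\fb\in\B(N_0,H)$. Lemma \ref{LEM} applies to $N_0$ (its proof uses only that the nilpotent lies in $\fu$), producing $\gamma\in\Phi_w^m$ with $U_\gamma\dot{w}\cdot\fb\subset\B(N_0,H)$; translating by $u_0$ gives the rational curve $u_0U_\gamma\dot{w}\cdot\fb\subset\B(N,H)$ whose closure joins $u_0\dot{w}\cdot\fb$ to $u_0\dot{s}_\gamma\dot{w}\cdot\fb$, with $l(s_\gamma w)<l(w)$. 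Induction on $l(w)$ then carries \emph{every} point, not just the $T$-fixed ones, down to $u_0\cdot\fb=\fb$ in finitely many rational curves. Your handling of the $T$-fixed points themselves (your second step: $\Phi_w^m\neq\emptyset$, the limit $\dot{s}_\gamma\dot{w}\cdot\fb$ lying in the closed subvariety $\B(N,H)$, and $s_\gamma w<w$ for $\gamma\in\Phi_w$) agrees with the paper; it is only the passage from a general point of a cell to its fixed point that must be replaced by this conjugation trick.
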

\begin{proof}  Suppose $\B(N,H)\cap X_w\neq \emptyset$, i.e., there exists some $u_0\in U$ such that $u_0\dot{w}\cdot \fb\in \B(N,H)$.  Let $N_0:=u_0^{-1}\cdot N \in \fu$.  Then:
		\[
				u_0\dot{w}\cdot \fb\in \B(N,H) \Leftrightarrow u_0^{-1}\cdot N\in \dot{w}\cdot H 													\Leftrightarrow N_0\in \dot{w}\cdot H \Leftrightarrow \dot{w}\cdot \fb\in \B(N_0,H). 
		\]
By Lemma \ref{LEM}, since $N_0\in \fu$ and $\dot{w}\cdot \fb\in \B(N_0,H)$ there exists $\gamma\in \Phi_w$ so that $U_{\gamma}\dot{w}\cdot \fb \subset \B(N_0,H)$.  But this is the case if and only if for all $u\in U_{\gamma}$,
		\[
				u^{-1}\cdot N_0\in \dot{w}\cdot H \Leftrightarrow u^{-1}u_0^{-1}\cdot N\in \dot{w}\cdot H
		\]
in other words, $u_0U_{\gamma}\dot{w}\cdot \fb\subset \B(N,H)$.  Since $\gamma\in \Phi_w$, $s_{\gamma}w<w$ and
		\[
				\lim_{t\to \infty} u_0U_{\gamma}\dot{w}\cdot \fb=\lim_{t\to \infty} u_0exp(tE_{\gamma})\dot{w}\cdot \fb								=u_0\dot{s}_{\gamma}\dot{w}\cdot \fb\in \B(N,H)
		\]
(see the proof of Theorem 2.11 in \cite{BGG}).  We have shown that $u_0\dot{w}\cdot \fb$ is connected to $u_0\dot{s}_{\gamma}\dot{w}\cdot \fb$ by the rational curve $\overline{u_0U_{\gamma}\dot{w}\cdot\fb}$ in $\B(N,H)$.  It follows by induction on $l(w)$ that if $u_0\dot{w}\cdot \fb\in \B(N,H)$, then it is connected by a sequence of rational curves in $\B(N,H)$ to $u_0\cdot \fb=\fb$.
\end{proof}
		

%

{\bf Acknowledgements.} The author would like to thank Sam Evens and Markus Hunziker for providing useful feedback on drafts of the above.

\end{document}